\documentclass[12pt]{amsart}

\usepackage{amsmath,amssymb,amsbsy,amsfonts,amsthm,latexsym,mathabx,
            amsopn,amstext,amsxtra,euscript,amscd,stmaryrd,mathrsfs,
            array,mathtools,enumerate}
\usepackage{url}
\usepackage[colorlinks,linkcolor=blue,anchorcolor=blue,citecolor=blue,backref=page]{hyperref}
\usepackage{color}
\usepackage[utf8]{inputenc}
\usepackage[norefs,nocites]{refcheck}
\usepackage{mathtools}
\usepackage{todonotes}
\usepackage{float}
\hypersetup{breaklinks=true}
\usepackage[english]{babel}
\usepackage{enumitem}
\usepackage{a4wide}
\usepackage{enumerate}
\usepackage[noadjust]{cite}
\usepackage[curve]{xypic}

\def\le{\leqslant}
\def\ge{\geqslant}
\def\leq{\leqslant}
\def\geq{\geqslant}

\DeclareMathOperator{\Gal}{Gal}

\DeclareMathOperator{\ind}{ind}
\DeclareMathOperator{\dens}{dens}
\DeclareMathOperator{\rank}{rank}

\newtheorem{theorem}{Theorem}

\newtheorem{cor}[theorem]{Corollary}
\newtheorem{prop}[theorem]{Proposition}
\newtheorem{definition}{Definition}

\newtheorem{example}[theorem]{Example}
\newtheorem{rem}[theorem]{Remark}

\numberwithin{equation}{section}
\numberwithin{theorem}{section}
\numberwithin{table}{section}

\numberwithin{figure}{section}

\def\\{\cr}
\def\({\left(}
\def\){\right)}
\def\<{\langle}
\def\>{\rangle}

\def\le{\leqslant}
\def\ge{\geqslant}

\def \sB{{\mathscr B}}

\def\cO{{\mathcal O}}

\def\cS{{\mathcal S}}

\def\Q{\mathbb{Q}}

\title[Uniform bounds for the density in Artin's conjecture]{Uniform bounds for the density\\ in Artin's conjecture on primitive roots}
\author[A. Perucca and I.E. Shparlinski]{Antonella Perucca and Igor E. Shparlinski}

\address{A.P.: Department of Mathematics, University of Luxembourg, 
Esch-sur-Alzette,  L-4364, Luxembourg}
\email{antonella.perucca@uni.lu}

\address{I.S.: School of Mathematics and Statistics, University of New South Wales.
Sydney, NSW 2052, Australia}
\email{igor.shparlinski@unsw.edu.au}

\begin{document}

\begin{abstract}
We consider Artin's conjecture on primitive roots over a number field $K$, reducing an algebraic number $\alpha\in K^\times$. Under the Generalised Riemann Hypothesis, there is a density $\dens(\alpha)$ counting the proportion of the primes of $K$ for which $\alpha$ is a primitive root.
This density $\dens(\alpha)$ is a rational multiple of an Artin constant $A(\tau)$ that depends on the largest integer $\tau\geq 1$ such that 
$\alpha\in \(K^\times\)^\tau$. 
The aim of this paper is bounding the ratio $\dens(\alpha)/A(\tau)$, under the assumption that $\dens(\alpha)\neq 0$. Over $\mathbb Q$, this ratio is between $2/3$ and $2$, these bounds being optimal. For a general number field  $K$ we provide upper and lower bounds that only depend on $K$.
\end{abstract}

\keywords{primitive root, Artin's conjecture}
\subjclass[2010]{11A07, 11M26, 11R45}

\pagenumbering{arabic}

\maketitle

\section{Introduction}

We consider Artin's conjecture on primitive roots over number fields, pointing the reader to Moree's survey~\cite{MoreeArtin} for an extensive introduction to this topic 
and a very rich list of references. 
The classical conjecture concerns the field $\mathbb Q$ and the reductions of a rational number $\alpha \notin \{0,\pm 1\}$ which is not a square. Then, under the Generalised Riemann Hypothesis (GRH), the following set of prime numbers $p$ has a positive Dirichlet density $\dens(\alpha)$: the primes $p$ such that $\alpha$ is a primitive root modulo $p$, which means that $v_p(\alpha)=0$
 and that $(\alpha \bmod p)$ generates  $(\mathbb Z/p\mathbb Z)^\times$.
Letting $\tau\geq 1$ be the largest integer such that $\alpha$ is a  $\tau$-th power in $\mathbb Q^\times$, the heuristic density is the Artin constant 
$$
A(\tau) :=  \prod_{\ell\mid \tau}\left(1-{1\over \ell-1}\right)\prod_{\ell\nmid \tau}\left(1-{1\over \ell(\ell-1)}\right), 
$$
 where $\ell$ varies over the prime numbers.
It is known that $A(1)\sim 0.37$ and $A(\tau)\leq A(1)$ is a positive rational multiple of $A(1)$.
The density $\dens(\alpha)$ is a positive rational multiple of $A(\tau)$, and the  aim of this work is bounding the ratio ${\dens(\alpha)}/{A(\tau)}$.

If $\tau=1$, then we have $$\frac{84}{85}\leq \frac{\dens(\alpha)}{A(1)}\leq \frac{6}{5}$$ 
where the bounds are attained for  $\alpha=-15$ and $\alpha=-3$, respectively.
In general, we have the following bounds (attained for $\alpha=(-15)^{15}$ and $\alpha=(-3)^{3}$,  respectively): $$\frac{2}{3}\leq \frac{\dens(\alpha)}{A(\tau)} \leq 2\,.$$
We deduce that the largest Artin density over $\mathbb Q$ is 
$$
\dens(-3)=\frac{6}{5}A(1)\sim 0.45\, .
$$

We also consider  this  problem for a general number field $K$.  
We again assume the GRH (more precisely, we assume the Extended Riemann Hypothesis for the zeta function of number fields). We suppose that $\alpha\in K^\times$ is not a root of unity and it is not a square in $K^{\times}$, and we exclude the finitely many primes $\mathfrak p$ of $K$ such that 
$v_\mathfrak p(\alpha)\neq 0$, which ensures that $(\alpha \bmod \mathfrak p)$ is well-defined and non-zero. 
Then $\dens(\alpha)$ is the density of the primes $\mathfrak p$ of $K$ such that
$(\alpha \bmod \mathfrak p)$ generates the cyclic group $(\cO/\mathfrak p)^\times$, 
where $\cO$ denotes  the ring of integers of $K$.

For every positive integer $n$ let $\zeta_n = \exp(2\pi i /n)$.
Denote by $B$ the square-free part of the smallest even integer $\Omega\geq 1$ such that the largest abelian subextension of $K/\mathbb Q$ is contained in the cyclotomic field $\mathbb Q(\zeta_\Omega)$ (in particular, $B$  depends only on $K$). We restrict to the case $\dens(\alpha)\neq 0$ (see Lenstra's work~\cite{Lenstra} for a study of this condition) and in particular the largest integer $\tau$ such that $\alpha\in \(K^{\times}\)\tau$ is odd. By Theorem~\ref{upperbound} we have 
$$\frac{\dens(\alpha)}{A(\tau)}\leq \prod_{\ell \mid B, \ell\mid \tau}
\frac{\ell-1}{\ell-2}
\prod_{\ell \mid B, \ell\nmid \tau}
\frac{\ell^2-\ell}{\ell^2-\ell-1}
\leq 2 \prod_{\ell \mid (B/2)}
\frac{\ell-1}{\ell-2} \,.
$$
So we have an upper bound that only depends on $B$:  since $A(\tau)$ can be arbitrarily small by varying $\tau$, our uniform bound has an advantage with respect to the trivial bound $\dens(\alpha)\leq 1$.

Finally, we prove that there exists an explicit positive constant $c_B$ that only depends on $B$ (in particular, it only depends on $K$) such that 
$$\frac{\dens(\alpha)}{A(\tau)}\geq c_B\,.$$
Our results for $\mathbb Q$ rely on the explicit formulas for $\dens(\alpha)$ provided by Hooley~\cite{Hooley}.
For general $K$, our upper bound  stems from the following observation (where by `index' we mean the index of $(\alpha \bmod \mathfrak p)$, namely the index of the group $\langle(\alpha \bmod \mathfrak p)\rangle$ inside  $(\cO/\mathfrak p)^\times$): the set of primes $\mathfrak p$ of $K$ such that the index is $1$ is  contained in the 
set of primes $\mathfrak p$ of $K$ such that all prime divisors of the index divide $B$. Finally, our lower bound is based on insights on the so-called \emph{entanglements}, namely, the interdependencies among the cyclotomic-Kummer extensions  $K(\zeta_\ell, \sqrt[\ell]{\alpha})/K$, where $\ell$ varies in the set of prime numbers.

We may replace $\alpha$ by a finitely generated and torsion-free subgroup $G$ of $K^\times$ of  positive rank, and then compare the Artin density $\dens(G)$ to a suitable Artin constant $\mathcal A_{\mathcal R}$ (see Definition~\ref{defi}). We are able to provide upper and lower bounds for  $\dens(G)/\mathcal A_{\mathcal R}$ that only depend on $K$, see Corollary~\ref{cor:KK}.

Finally remark  that Section~\ref{secent} contains Kummer theory results describing the entanglements among the field extensions  $K(\zeta_\ell, \sqrt[\ell]{G})/K$, where $\ell$ varies in the set of prime numbers. Those  results are unconditional and are of independent interest.

\subsection*{Acknowledgements} The first author got curious about the fact that $-3$ maximizes the Artin density over the rational numbers. Then discussions between the two authors in 2022 at the Max-Planck-Institut für Mathematik in Bonn  and the University of Luxembourg led to the uniform bounds over $\mathbb Q$. The hospitality of both institutions is gratefully acknowledged.
Finally, the first author generalised the bounds to  number fields in 2024. 
We thank Hendrik Lenstra, Pieter Moree, Pietro Sgobba and Kate Stange for encouragement, and Olli J\"arviniemi for confirming the plausibility of our main result. Last but not least, we are grateful to Fritz Hörmann for providing the very general proof of Proposition~\ref{prop:Fritz}.

During the preparation of this work, I.S. was supported in part by  Australian Research Council Grants  DP230100530 and  DP230100534.

\section{The Artin  density for the rational numbers}

In this section we work over $\mathbb Q$, keeping the notation of the introduction and assuming the GRH. Remark that we use the notation $\ell$ to denote prime numbers, and $\mu$ is the Moebius function. 

Let $\alpha\in \Q\backslash\{0,\pm 1\}$ be not a square in $\mathbb Q^\times$, and call $\delta$ the square-free integer such that $\alpha/\delta$ is a square in $\mathbb Q^\times$.
Denote by $\tau$ the largest integer for which $\alpha \in \(\mathbb Q^{\times}\)^\tau$ and remark that $\tau$ is odd. 
By Hooley's work~\cite{Hooley}, the Artin density can then be written as 
\begin{equation}
\label{eq:Def dens}
\dens(\alpha) = A(\tau) \begin{cases} 1  & \text{if\ } \delta \not\equiv 1 \pmod 4,\\  
1-\mu(|\delta|)f_\tau(\delta)  &  \text{otherwise}\,,
\end{cases}
\end{equation}
where
$$
f_\tau(\delta)  = \prod_{\ell\mid \delta, ~\ell\mid \tau}\frac{1}{\ell-2}\prod_{\ell\mid \delta, ~\ell\nmid \tau}\frac{1}{\ell^2-\ell-1}\,.
$$
As the above quantities $\mu(|\delta|)$ and $f_\tau(\delta)$ are only needed for the case $\delta \equiv 1 (\bmod 4)$ we may define $\delta$, as done by Moree in~\cite{MoreeArtin}, as the discriminant of $\mathbb Q(\sqrt{\alpha})$.

\begin{theorem} \label{thm:boundA}
For $\tau=1$, we have $\frac{\dens(\alpha)}{A(1)}\leq \frac{6}{5}$ and 
for arbitrary $\tau$ 
$$ 2 \ge  \frac{\dens(\alpha)}{A(\tau)} \geqslant  \begin{cases} \frac{84}{85}&\text{if $\tau=1,$}\\
\frac{18}{19} &  \text{if $\tau$ is power of $3$,}\\
\frac{2}{3} &\text{if $3\mid \tau$ and $t  = 5$,}\\
\frac{14}{15} &\text{if $3\nmid \tau$ and $t = 5$,}\\
1-\frac{1}{\min\{19, t-2\}} &\text{if $3\mid \tau$ and $t \ge 7$,}\\
1-\frac{1}{5 \min\{19, t-2\}} &\text{if $3\nmid \tau$ and $t \ge 7$,}\\
\end{cases}
$$
where $t$ is the smallest prime which exceeds $3$ and divides $\tau$, if it exists, and $t=0$ otherwise, and each of the lower and upper bounds is attained. 
\end{theorem}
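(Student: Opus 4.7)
The plan is to proceed directly from Hooley's formula \eqref{eq:Def dens}. If $\delta \not\equiv 1 \pmod 4$, then $\dens(\alpha)/A(\tau) = 1$, which satisfies every inequality claimed in the theorem, so we may restrict to the case $\delta \equiv 1 \pmod 4$. There $\dens(\alpha)/A(\tau) = 1 - \mu(|\delta|) f_\tau(\delta)$, and since $\delta$ is squarefree we have $\mu(|\delta|) \in \{\pm 1\}$. The task thus reduces to optimising the correction $f_\tau(\delta)$ over admissible $\delta$, separately in the two regimes $\mu(|\delta|) = -1$ (for the upper bound) and $\mu(|\delta|) = +1$ (for the lower bound).

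For the upper bound, the key observation is that every factor of $f_\tau(\delta)$ is bounded by $1$, with equality occurring only at $\ell = 3$ when $3 \mid \tau$; adding extra primes to $\delta$ only multiplies $f_\tau$ by further factors that are strictly less than $1$. Hence the maximum of $f_\tau(\delta)$ over single-prime $\delta$ with $\mu(|\delta|) = -1$ is attained at $\delta = -3$ (note that $-3 \equiv 1 \pmod 4$): one obtains $f_\tau(-3) = 1$ if $3 \mid \tau$, giving ratio $2$ (realised for example by $\alpha = (-3)^{3}$), and $f_\tau(-3) = 1/5$ otherwise, giving ratio $6/5$ (realised by $\alpha = -3$, and covering the upper bound for $\tau = 1$).

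For the lower bound, the requirement $\mu(|\delta|) = +1$ forces $|\delta|$ to have an even number of prime divisors, hence at least two. A direct comparison of the factor sizes shows that four or more prime divisors strictly decrease $f_\tau$, so it is enough to optimise over two-prime $\delta = \pm \ell_1 \ell_2$ with $\ell_1 < \ell_2$, choosing the sign so that $\delta \equiv 1 \pmod 4$ (always possible for a given pair $(\ell_1,\ell_2)$). Because the factor at $\ell = 3$ strictly dominates the factor at any larger prime, the optimum always has $\ell_1 = 3$, and the choice of $\ell_2$ is then dictated by the prime factorisation of $\tau$. A short case analysis then identifies the extremal $\delta$: in the cases $t \in \{0, 5\}$ the extremal choice is $\delta = -15$, producing the listed bounds ($94/95$ or $84/85$ for $\tau = 1$, $18/19$ for $\tau$ a power of $3$, and $2/3$ or $14/15$ when $t = 5$); in the cases $t \geq 7$ one compares $\delta = -15$ (yielding $f_\tau = 1/19$ or $f_\tau = 1/95$ according as $3 \mid \tau$ or not) with $\delta = \pm 3t$ with the forced sign (yielding $f_\tau = 1/(t-2)$ or $f_\tau = 1/(5(t-2))$), and the minimum of the two corresponding ratios is exactly $1 - 1/\min(19, t-2)$ or $1 - 1/(5\min(19, t-2))$.

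The principal obstacle is verifying these optimisations rigorously: one must rule out configurations with $\ell_1 \geq 5$ or with four or more prime divisors by bounding factor by factor, and one must confirm that the parity constraint $\delta \equiv 1 \pmod 4$ imposes no genuine obstruction, which follows from the observation that, for any pair of odd primes $\ell_1 < \ell_2$, exactly one of the sign choices $\pm \ell_1 \ell_2$ lands in the correct residue class modulo $4$. Once this is done, sharpness of each bound is witnessed by an explicit $\alpha$ whose squarefree part is the corresponding extremal $\delta$, completing the proof.
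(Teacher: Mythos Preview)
Your approach is essentially identical to the paper's: reduce via Hooley's formula to optimising $f_\tau(\delta)$ over admissible $\delta$ with the appropriate sign of $\mu(|\delta|)$, observe that extra prime factors can only shrink $f_\tau$, hence take $|\delta|$ prime for the upper bound and a product of two primes for the lower, choose $\ell_1=3$ in the latter case, and determine $\ell_2$ by comparing $G(5)=1/19$ with $F(t)=1/(t-2)$. Your hedged ``$94/95$ or $84/85$'' simply records that the direct computation $1-G(3)G(5)=1-1/95=94/95$ does not match the value $84/85$ printed in the statement; the method itself is the same as the paper's.
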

\begin{proof} 
We rely on the explicit formula~\eqref{eq:Def dens}. If $\delta\not \equiv 1 (\bmod 4)$ we have ${\dens(\alpha)}/{A(\tau)}=1$, so we may restrict to the case $\delta\equiv 1 (\bmod 4)$. We  need to maximize 

$$f_\tau(\delta)=\prod_{\ell\mid \delta, ~\ell\mid \tau} F(\ell) \prod_{\ell\mid \delta, ~\ell\nmid \tau} G(\ell)\,,$$
where
$$F(\ell) := \frac{1}{\ell-2}\quad,\quad G(\ell) := \frac{1}{\ell^2-\ell-1}\,.$$
To get an upper (respectively, lower) bound we have to choose $\delta$ with negative (respectively, positive) Moebius function $\mu(|\delta|)$.
We can take $|\delta|$ prime (respectively, the product of two primes) because $f_\tau(\delta)$ does not increase by adding prime factors to $\delta$.
Remark that
  \begin{equation}
\label{eq:FG val}
 \begin{split}
 &F(3) =1 >  G(3)=1/5 \ge \max\{F(\ell), G(\ell):~ \ell \ge 5, \ \text{prime}\};\\
& F(5) =1/3 > G(5)=1/19\ge \max\{F(\ell), G(\ell):~ \ell \ge 23, \ \text{prime}\}.
 \end{split}
 \end{equation}
We have in particular 
 $$
f_1(\delta) = \prod_{\ell\mid \delta}G(\ell) \le  \min_{\ell\mid \delta}G(\ell) \le G(3) = \frac{1}{5} = f_1(-3)\,.
$$
So, supposing $\tau=1$, for $\alpha=-3$ we get the largest value ${\dens(\alpha)}/{A(1)}=6/5$.
Now consider a general  $\tau\geq 1$. We have 
$$
 f_\tau(\delta)  = \prod_{\ell\mid \delta, ~\ell\mid \tau}F(\ell)
 \prod_{\ell\mid \delta, ~\ell\nmid \tau}G(\ell)  \ll
 \max\left\{\max_{\ell\mid \delta, ~\ell\mid \tau} F(\ell), 
 \max_{\ell\mid \delta, ~\ell\nmid \tau} G(\ell)\right\} .
$$
Thus, considering~\eqref{eq:FG val}, we get  
$ f_\tau(\delta)  \leq    f_3(-3)=1$.
So for $\alpha=(-3)^3$ we get the largest  value ${\dens(\alpha)}/{A(\tau)}=2$.

To get a lower bound,  we have to find the optimal choice for two distinct odd primes $\ell_1,\ell_2$ and take $\delta \in \{ \pm \ell_1\ell_2\}$, recalling that $\delta\equiv 1 \bmod 4$. By~\eqref{eq:FG val} we can take  $\ell_1 = 3$.
In the first four cases of the lower bound we may take $\ell_2= 5$ and hence $\delta=-15$, so we easily conclude those cases by computing ${\dens((-15)^\tau)}/{A(\tau)}$.

In the penultimate case (where $5\nmid \tau$) by~\eqref{eq:FG val} we take $\ell_2 = 5$ if $t\ge 23$ and $\ell_2 = t$ otherwise
  and obtain 
  $$
f_\tau(\delta)   = F(3) \max\{G(5), F(t)\}  = \frac{1}{\min\{19, t-2\}}\,.
$$
In the last case, a similar argument leads to the same choice of $\delta$ and hence 
$$
f_\tau(\delta)  = G(3) \max\{G(5), F(t)\}  = \frac{1}{5\min\{19, t-2\}}\,,
$$
which concludes the proof. 
\end{proof}

\begin{example} By Hooley's formula~\eqref{eq:Def dens}
 we clearly obtain the largest value for $\dens(\alpha)$ with $\tau=1$. Then the largest Artin density over $\mathbb Q$ is $\dens(-3)=\frac{6}{5}A(1)$. In fact, $-3$ maximizes the Artin density because if the index of $(-3 \bmod p)$ is odd, then it is automatically coprime to $3$ (indeed, we are considering primes $p$ that do not split completely in $\mathbb Q(\sqrt{-3})=\mathbb Q(\zeta_3)$).
\end{example}

\section{Setup for the general case}

\subsection{Notation} \emph{We  assume the Extended Riemann Hypothesis for the zeta function of number fields.}

Let $K$ be a number field, and work within a fixed algebraic closure of $K$. Write $\zeta_n$ for a primitive $n$-th root of unity. 

Let $\alpha\in K^\times$ be not a root of unity, while $G$ is a finitely generated and torsion-free subgroup of $K^\times$ of positive rank $r$. While considering the reductions of $\alpha$ (respectively, $G$) modulo primes  of $K$ we tacitly exclude the finitely many primes $\mathfrak p$ such that the reduction modulo $\mathfrak p$ is not well-defined or it is not contained in the multiplicative group of the residue field at $\mathfrak p$. Thanks to this restriction we have a well-defined multiplicative index of $\alpha$ modulo $\mathfrak p$ (respectively, of $G$  modulo $\mathfrak p$) in the multiplicative group of the residue field at $\mathfrak p$.

We call $\dens(\alpha)$ the density of  primes $\mathfrak p$ of $K$ for which $\ind(\alpha \bmod \mathfrak p)=1$. We similarly define $\dens(G)$ requiring $\ind(G \bmod \mathfrak p)=1$. 

\subsection{Artin constants}
By the very general framework developed by the first author with J\"arviniemi and Sgobba~\cite{JP, JPS}, we know in particular that the Artin density $\dens(G)$ is a rational multiple of the following Artin constant (which is at least $A(1)$ and it is strictly less than $1$):   $$\prod_{\ell \text{  prime}}\left(1-{1\over \ell^r(\ell-1)}\right)\,.$$
We now define a  rational multiple of the above constant with the aim of producing an Artin constant that better fits $\dens(G)$: 
\begin{definition}\label{defi}
For every prime $\ell$ let $r_\ell $ be an integer in the range from $0$ to $r$, such that $r_\ell=r$ holds for all but finitely many $\ell$. Then we define the Artin constant
$$A_{\mathcal R}:= \prod_{\ell \text{  prime}}\left(1-{1\over \ell^{r_\ell}(\ell-1)}\right)\,.$$
\end{definition}

Given $G$ as above, for every prime $\ell$ we define $r_\ell$ as the rank of the group  $G/(G\cap K^{\times \ell})$.
Then we have $A_{\mathcal R}>0$ if we suppose that $r_2>0$ (in fact, $r_2=0$ implies that $G$ consists of squares and hence $\dens(G)=0$).

\begin{example}
For $G=\langle \alpha \rangle$ we have $r_\ell\in \{0,1\}$. Suppose  that $\alpha$ is not a square in $K^\times$, and call $\tau$ the largest (odd) integer such that 
$\alpha\in \(K^\times\)^\tau$. Then we have $r_\ell=0$ if and only if $\ell\mid \tau$, which implies that $A_{\mathcal R}=A(\tau)$.
\end{example}

\section{Entanglements}\label{secent}

\emph{The results in this section are unconditional.}
We denote by $Q$ the product of the prime numbers $q$ such that $\zeta_q\in K$. Moreover, we denote by $B$ the square-free part of the smallest even integer $\Omega\geq 1$ such that the largest abelian subextension of $K/\mathbb Q$ is contained in $\mathbb Q(\zeta_\Omega)$. In particular, we have $Q\mid B$. Remark that $Q$ and $B$ are constants that only depend on $K$ (more precisely, they only depend on the largest abelian subextension of $K/\mathbb Q$). We remark that for the results in this section it is possible to replace $B$ by any positive multiple of it which is again square-free.

\begin{rem}\label{helpful}
Because of our choice of $B$ the following holds.
For all primes $\ell\nmid B$ the cyclotomic fields $K(\zeta_\ell)$ are linearly disjoint and have maximal degree $\ell-1$ over $K$. Moreover, for any fixed prime $\ell\nmid B$, and denoting any prime by $\widetilde \ell$, we have
$$ K(\zeta_\ell) \cap \prod_{\widetilde \ell \neq \ell}K(\zeta_{\widetilde \ell})=K\,.$$
Consequently, we have 
$$ K(\zeta_B) \cap \prod_{\ell \nmid B}K(\zeta_{\ell})=K\,.$$
\end{rem}

\begin{prop}\label{helpful2}
For every prime $\ell\nmid B$ the degree of the cyclotomic-Kummer extension $K(\zeta_\ell, \sqrt[\ell]{G})/K$ equals $\ell^{r_\ell}(\ell-1)$. For every prime $\ell$ such degree divides $\ell^{r_\ell}(\ell-1)$. \end{prop}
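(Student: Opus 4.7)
The plan is to factor the extension $K\subset K(\zeta_\ell)\subset K(\zeta_\ell,\sqrt[\ell]{G})$ and compute each factor separately. The first factor $[K(\zeta_\ell):K]$ embeds into $(\mathbb Z/\ell\mathbb Z)^\times$ via the cyclotomic character, so it always divides $\ell-1$; by Remark~\ref{helpful}, equality holds when $\ell\nmid B$.

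For the second factor, Kummer theory gives that $[K(\zeta_\ell,\sqrt[\ell]{G}):K(\zeta_\ell)]=\ell^{s_\ell}$, where $s_\ell$ is the $\mathbb F_\ell$-dimension of the image of $G$ in $K(\zeta_\ell)^\times/K(\zeta_\ell)^{\times \ell}$. Any set of $r_\ell$ representatives of generators of $G/(G\cap K^{\times \ell})$ maps to a generating set of this image, which immediately yields $s_\ell\leq r_\ell$. Combined with the first step, this gives the divisibility statement for every prime $\ell$.

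To upgrade the divisibility to equality when $\ell\nmid B$, I would in fact show the stronger statement that $s_\ell=r_\ell$ for \emph{every} prime $\ell$. This reduces to proving that $K^\times\cap K(\zeta_\ell)^{\times \ell}=K^{\times\ell}$, which I would establish by a norm argument. Suppose $g\in K^\times$ satisfies $g=h^\ell$ for some $h\in K(\zeta_\ell)^\times$, and write $d=[K(\zeta_\ell):K]$. Applying the norm $N\colon K(\zeta_\ell)^\times\to K^\times$ gives $g^d=N(h)^\ell$. Since $d\mid \ell-1$, we have $\gcd(d,\ell)=1$, so Bézout coefficients $ad+b\ell=1$ yield $g=g^{ad+b\ell}=(N(h)^a g^b)^\ell\in K^{\times \ell}$. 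Consequently the natural map $G/(G\cap K^{\times\ell})\to G/(G\cap K(\zeta_\ell)^{\times\ell})$ has trivial kernel, so $s_\ell=r_\ell$.

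Combining the two factors yields $[K(\zeta_\ell,\sqrt[\ell]{G}):K]=[K(\zeta_\ell):K]\cdot \ell^{r_\ell}$, which equals $\ell^{r_\ell}(\ell-1)$ precisely when $\ell\nmid B$, and always divides $\ell^{r_\ell}(\ell-1)$. The main (and only nontrivial) subtlety is the norm/Bézout step that isolates $K^{\times\ell}$ inside $K(\zeta_\ell)^{\times\ell}$; everything else is routine bookkeeping with degrees and Kummer theory.
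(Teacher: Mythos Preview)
Your proof is correct. The overall decomposition $K\subset K(\zeta_\ell)\subset K(\zeta_\ell,\sqrt[\ell]{G})$ and the handling of the cyclotomic factor via Remark~\ref{helpful} match the paper exactly. The difference lies in the Kummer step: the paper establishes that $r_\ell$ equals the rank of $G/(G\cap K(\zeta_\ell)^{\times\ell})$ by invoking the results of Debry--Perucca~\cite{Debry} on $\ell$-divisibility parameters (using only that $\ell\neq 2$, which follows from $2\mid B$). You instead give a direct, self-contained argument: the norm from $K(\zeta_\ell)$ to $K$ combined with B\'ezout for $\gcd(d,\ell)=1$ (where $d=[K(\zeta_\ell):K]\mid \ell-1$) shows $K^\times\cap K(\zeta_\ell)^{\times\ell}=K^{\times\ell}$, and hence $G\cap K(\zeta_\ell)^{\times\ell}=G\cap K^{\times\ell}$. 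This is more elementary and even slightly stronger (it works uniformly for all primes $\ell$, including $\ell=2$, without needing to single out the odd case), at the cost of a few extra lines. The paper's route is terser but leans on external machinery; yours keeps the argument within basic algebraic number theory.
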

\begin{proof}
The second assertion is straight-forward by classical Kummer theory. 
By Remark~\ref{helpful} we are left to prove that for $\ell\nmid B$ the degree of $K(\zeta_\ell, \sqrt[\ell]{G})/K(\zeta_\ell)$
equals $\ell^{r_\ell}$. 
We claim that $r_\ell$ is the rank of $G/(G\cap K(\zeta_\ell)^{\times \ell})$, so the assertion follows from classical Kummer theory. The claim holds  because $\ell\neq 2$ so with the language of~\cite{Debry} the  parameters for $\ell$-divisibility of $G$ are the same over $K$ and over $K(\zeta_\ell)$.
\end{proof}

\begin{prop}\label{helpful3}
For every prime $\ell\nmid B$ we have
$K(\zeta_\ell, \sqrt[\ell]{G})\cap K(\zeta_\infty)=K(\zeta_\ell)$. Moreover, (for any choice of the $\ell$-th roots) we have $K( \sqrt[\ell]{G})\cap K(\zeta_\infty)=K$. 
For all primes $\ell\nmid B$ the cyclotomic-Kummer extensions  $K(\zeta_\ell,  \sqrt[\ell]{G})/K$ are linearly disjoint over $K$.
\end{prop}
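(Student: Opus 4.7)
I would prove the three assertions in sequence, with the first being the crux and the other two reducing to it quickly. The overall strategy for the first assertion is to argue by contradiction, exploiting the fact that a Kummer extension of $K(\zeta_\ell)$ of degree $\ell$ obtained from an element of $K^\times$ produces, together with $K(\zeta_\ell)/K$, a non-abelian extension of $K$, whereas any subextension of $K(\zeta_\infty)/K$ is abelian.

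\textbf{First assertion.} Set $L:=K(\zeta_\ell,\sqrt[\ell]{G})\cap K(\zeta_\infty)$, which clearly contains $K(\zeta_\ell)$. Assume for contradiction that $L\supsetneq K(\zeta_\ell)$. By classical Kummer theory applied to $K(\zeta_\ell,\sqrt[\ell]{G})/K(\zeta_\ell)$, we may pick $g\in G$ with $g\notin K(\zeta_\ell)^{\times\ell}$ such that $\sqrt[\ell]{g}\in L$, and set $L':=K(\zeta_\ell,\sqrt[\ell]{g})\subseteq L\subseteq K(\zeta_\infty)$. Since $X^\ell-g\in K[X]$ splits in $L'$, the extension $L'/K$ is Galois, and as a subextension of the abelian extension $K(\zeta_\infty)/K$ it is itself abelian. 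On the other hand, $\ell\nmid B$ forces $[K(\zeta_\ell):K]=\ell-1$ (Remark~\ref{helpful}), so $\Gal(L'/K)$ has the normal subgroup $\Gal(L'/K(\zeta_\ell))\cong\mathbb Z/\ell$ with quotient $\Gal(K(\zeta_\ell)/K)\cong(\mathbb Z/\ell)^\times$ acting by conjugation through the cyclotomic character. Because $\ell\geq 3$ (as $B$ is even), this action is nontrivial and $\Gal(L'/K)$ is non-abelian, a contradiction.

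\textbf{Second assertion.} The first assertion gives
$$K(\sqrt[\ell]{G})\cap K(\zeta_\infty)\subseteq K(\zeta_\ell,\sqrt[\ell]{G})\cap K(\zeta_\infty)=K(\zeta_\ell),$$
so the intersection lies in $K(\sqrt[\ell]{G})\cap K(\zeta_\ell)$. But $[K(\sqrt[\ell]{G}):K]$ is a power of $\ell$ (successive adjunctions of $\ell$-th roots produce degree $1$ or $\ell$ at each step), hence coprime to $[K(\zeta_\ell):K]=\ell-1$. Therefore $K(\sqrt[\ell]{G})\cap K(\zeta_\ell)=K$.

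\textbf{Third assertion.} Given finitely many primes $\ell_1,\dots,\ell_k\nmid B$, set $L_0:=\prod_{i=1}^{k}K(\zeta_{\ell_i})\subseteq K(\zeta_\infty)$; Remark~\ref{helpful} gives $[L_0:K]=\prod_i(\ell_i-1)$. For each $i$, the first assertion yields
$$L_0\cap K(\zeta_{\ell_i},\sqrt[\ell_i]{G})\subseteq K(\zeta_\infty)\cap K(\zeta_{\ell_i},\sqrt[\ell_i]{G})=K(\zeta_{\ell_i}),$$
with equality because $K(\zeta_{\ell_i})\subseteq L_0$; hence $[L_0(\sqrt[\ell_i]{G}):L_0]=[K(\zeta_{\ell_i},\sqrt[\ell_i]{G}):K(\zeta_{\ell_i})]=\ell_i^{r_{\ell_i}}$ by Proposition~\ref{helpful2}. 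These Kummer extensions over $L_0$ have pairwise coprime prime-power degrees and are therefore linearly disjoint over $L_0$, yielding
$$\Bigl[\prod_{i}K(\zeta_{\ell_i},\sqrt[\ell_i]{G}):K\Bigr]=\prod_i(\ell_i-1)\cdot\prod_i\ell_i^{r_{\ell_i}}=\prod_i[K(\zeta_{\ell_i},\sqrt[\ell_i]{G}):K],$$
which is the desired linear disjointness. The main obstacle is clearly the first assertion; the remainder is degree bookkeeping.
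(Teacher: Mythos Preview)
Your proof is correct. The overall route matches the paper's, but your treatment of the first assertion is more self-contained: the paper simply invokes Schinzel's theorem on abelian radical extensions (using $\ell\nmid Q$) to conclude that the abelian intersection $K(\zeta_\ell,\sqrt[\ell]{G})\cap K(\zeta_\infty)$ cannot exceed $K(\zeta_\ell)$, whereas you reprove the needed special case directly by exhibiting the non-abelian semidirect structure of $\Gal(K(\zeta_\ell,\sqrt[\ell]{g})/K)$ via the cyclotomic character. Your argument is perfectly valid here because $\ell\nmid B$ guarantees both $\ell\ge 3$ and $[K(\zeta_\ell):K]=\ell-1>1$, so the conjugation action is genuinely nontrivial; this is exactly the content of Schinzel's criterion in this situation. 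For the third assertion you argue by degree multiplicativity over the cyclotomic compositum $L_0$, while the paper instead shows each $K(\zeta_\ell,\sqrt[\ell]{G})$ meets the compositum of the others only in $K$; the two arguments are equivalent packagings of the same coprimality observation.
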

\begin{proof}
The field $K(\zeta_\ell,  \sqrt[\ell]{G})\cap K(\zeta_\infty)$ is an abelian extension of $K$ so by Schinzel's  result on abelian radical extensions~\cite[Theorem~2]{Schinzel} it must be contained in $K(\zeta_\ell)$ because $\ell\nmid Q$. The second assertion follows because the extensions $K(\zeta_\ell)/K$ and $K(\sqrt[\ell]{G})/K$ have coprime degrees.

For the last assertion it  suffices to observe that for every prime $\ell\nmid B$ and by varying $\widetilde \ell$ in the prime numbers we have $$K(\zeta_\ell, \sqrt[\ell]{G})\cap \prod_{\widetilde \ell \nmid  \ell B} K(\zeta_{\widetilde \ell}, \sqrt[\widetilde \ell]{G})\subseteq K(\zeta_\ell)\cap \prod_{\widetilde \ell\nmid \ell B} K(\zeta_{\widetilde \ell}, \sqrt[\widetilde \ell]{G}) = K(\zeta_\ell)\cap \prod_{\widetilde \ell\nmid \ell B} K(\zeta_{\widetilde \ell})=K\,.$$
The inclusion is because $K(\sqrt[\ell]{G})/K$ and  $K(\sqrt[\widetilde \ell]{G})/K$ have coprime degrees and then we may apply the first part of the statement.
The former equality again follows from the first part of the statement, while the latter equality is a consequence of Remark~\ref{helpful}.
\end{proof}

Let $B_G$ be the squarefree integer which is the smallest positive multiple of $B$ such that for any fixed prime $\ell\nmid B_G$ and denoting any prime by $\widetilde \ell$ we have
$$ K(\zeta_\ell, \sqrt[\ell]{G}) \cap \prod_{\widetilde \ell \neq \ell}K(\zeta_{\widetilde \ell}, \sqrt[\widetilde \ell]{G}))=K$$
(the existence of $B_G$ follows from Kummer theory, see~\cite[Section~3]{JP} by the first author and J\"arviniemi). We remark that in the following results we could replace $B_G$ by a positive multiple of it which is again square-free.

A proof of our next result has been communicated to the authors by Fritz Hörmann.

\begin{prop}\label{prop:Fritz}
Let $K$ be a field, let $L_1, L_2, M$ be finite abelian extensions of $K$ and consider the field $E:=ML_1 \cap L_2$. Assuming that $L_1 \cap L_2 = K$, the Galois group 
$\Gal(E/K)$ of $E/K$ is a quotient of a subgroup of the Galois group $\Gal(M/K)$ of $M/K$.
\end{prop}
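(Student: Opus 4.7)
The plan is to chase $E$ through the tower $K \subseteq L_1 \subseteq EL_1 \subseteq ML_1$, using the classical translation theorem to transport $\Gal(E/K)$ up to $L_1$ and then compare with $\Gal(M/K)$ via restriction along the inclusion $M \hookrightarrow ML_1$. The one preliminary I want to pin down is that $E/K$ is Galois: in fact it is abelian, because $E \subseteq L_2$ and $L_2/K$ is abelian by hypothesis.

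Next I would exploit the disjointness assumption $L_1 \cap L_2 = K$, which forces
$$E \cap L_1 \subseteq L_2 \cap L_1 = K,$$
simply because $E \subseteq L_2$. Since both $E/K$ and $L_1/K$ are Galois with trivial intersection, the translation theorem supplies a restriction isomorphism
$$\Gal(EL_1/L_1) \xrightarrow{\ \sim\ } \Gal(E/K).$$

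Finally, from $E \subseteq ML_1$ we deduce $EL_1 \subseteq ML_1$, and restriction yields a surjection $\Gal(ML_1/L_1) \twoheadrightarrow \Gal(EL_1/L_1)$; moreover, since $M/K$ is Galois, restriction to $M$ identifies $\Gal(ML_1/L_1)$ with the subgroup $\Gal(M/M\cap L_1)$ of $\Gal(M/K)$. Composing these maps exhibits $\Gal(E/K)$ as a quotient of the subgroup $\Gal(M/M\cap L_1) \leq \Gal(M/K)$, which is exactly the claimed conclusion. I do not foresee any genuine obstacle: every step is a standard consequence of Galois theory for compositums, and the hypothesis $L_1\cap L_2=K$ enters cleanly through the identity $E\cap L_1=K$. (Notice that abelianness of $M$ is not actually used; it would suffice for $M/K$ to be Galois.)
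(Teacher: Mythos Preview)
Your argument is correct. The key steps---$E\cap L_1=K$ from $E\subseteq L_2$, the translation isomorphism $\Gal(EL_1/L_1)\cong\Gal(E/K)$, the surjection $\Gal(ML_1/L_1)\twoheadrightarrow\Gal(EL_1/L_1)$, and the identification $\Gal(ML_1/L_1)\cong\Gal(M/M\cap L_1)\le\Gal(M/K)$---are all standard and fit together cleanly. Your closing remark that abelianness of $M$ is unnecessary (only that $M/K$ be Galois) is also accurate.

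The paper's proof reaches the same conclusion by a different route: it passes to the ambient compositum $F=L_1L_2M$, sets $G=\Gal(F/K)$ and $G_X=\Gal(F/X)$, and performs an explicit group-theoretic computation inside $G$, rewriting $\Gal(E/K)\cong G_{L_1}/\bigl((G_{L_1}\cap G_M)(G_{L_1}\cap G_{L_2})\bigr)$ and then recognising this as a quotient of $G_{L_1}G_M/G_M\le G/G_M\cong\Gal(M/K)$. Your approach sidesteps this computation entirely by invoking the translation theorem twice; it is shorter and makes the identification of the relevant subgroup, namely $\Gal(M/M\cap L_1)$, transparent from the outset. The paper's version, on the other hand, keeps everything inside a single abelian group and may appeal to readers who prefer an explicit lattice-of-subgroups calculation.
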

\begin{proof}
The compositum $F:=L_1L_2M$  is again a finite abelian extension of $K$. 
We write $G:=\Gal(F/K)$ and for $X\in \{L_1,L_2, M\}$ we write $G_X:=\Gal(F/X)$.
By our assumption, we have $G=G_{L_1} G_{L_2}$. 
First observe that we have 
\begin{equation}\label{FH}
\Gal(E/K)\cong \frac{G_{L_1}G_{L_2}}{G_{L_2}(G_{L_1} \cap G_M)}\cong \frac{G_{L_1}}{(G_{L_1} \cap G_M)(G_{L_1} \cap G_{L_2})}\,,
\end{equation}
where the second isomorphism is obtained by observing that $G_{L_2} \cap (G_{L} \cap G_M) = 1$ (by the definition of $F$) and hence
\[ \frac{G_{L_1}G_{L_2}}{G_{L_2}(G_{L_1} \cap G_M)} \cong \frac{\frac{G_{L_1}G_{L_2}}{G_{L_2}}}{G_{L_1} \cap G_M} \cong  \frac{\frac{G_{L_1}}{G_{L_1} \cap G_{L_2}}}{G_{L_1} \cap G_M} \cong  \frac{G_{L_1}}{(G_{L_1} \cap G_M)(G_{L_1} \cap G_{L_2})}\,.   \]
The last group in~\eqref{FH} is a quotient of 
$$\frac{G_{L_1} }{G_{L_1} \cap G_M}\cong \frac{G_{L_1}  G_M}{G_M}\,.$$
Finally, this last group is a subgroup of
$$\frac{G_{L_1} G_{L_2}}{G_M}\cong \Gal(M/K)\,$$
and the result follows. 
\end{proof}

We recall that the exponent of an abelian extension is defined as the 
exponent of its Galois group.

We instantly derive from Proposition~\ref{prop:Fritz} one of our main tools. 

\begin{cor}\label{cor:algebraic}
Let $L_1$, $L_2$ and $M$ be finite abelian extensions of $K$ such that $L_1\cap L_2=K$. Then the field
$M L_1 \cap L_2 \subseteq L_2$ is an abelian extension of $K$ of degree dividing $[M:K]$ and of exponent dividing the exponent of $M/K$.
\end{cor}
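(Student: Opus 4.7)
The corollary is essentially a direct repackaging of Proposition~\ref{prop:Fritz}, so my plan is to deduce each of the three claimed properties by invoking that proposition and then translating the group-theoretic conclusion into extension-theoretic language.

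First I would set $E := ML_1 \cap L_2$ and note that the hypotheses of Proposition~\ref{prop:Fritz} are exactly met: $L_1, L_2, M$ are finite abelian over $K$ and $L_1 \cap L_2 = K$. The proposition therefore supplies a group $H \le \Gal(M/K)$ and a surjection $H \twoheadrightarrow \Gal(E/K)$; in particular $|\Gal(E/K)|$ divides $|H|$, which in turn divides $[M:K]$, giving the claimed divisibility of degrees.

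Next, to see that $E/K$ is abelian, I would simply observe that $E \subseteq L_2$, and any subextension of the abelian extension $L_2/K$ is itself abelian over $K$. (Alternatively, $\Gal(E/K)$ is a quotient of a subgroup of the abelian group $\Gal(M/K)$, hence abelian.) For the exponent bound I would use the standard fact that both the operations ``pass to a subgroup'' and ``pass to a quotient'' can only divide, not increase, the exponent of an abelian group; applying this to $\Gal(M/K) \supseteq H \twoheadrightarrow \Gal(E/K)$ yields that the exponent of $\Gal(E/K)$ divides the exponent of $\Gal(M/K)$, which by definition is the exponent of $M/K$.

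There is really no obstacle here: Proposition~\ref{prop:Fritz} has done all the work, and the remaining task is only to record that ``quotient of a subgroup'' preserves the three invariants in question (being abelian, order, exponent). The only point to be careful about is phrasing the conclusion so that it is clear the divisibility refers to the group orders (i.e.\ $[E:K] \mid [M:K]$), which I would state explicitly in one short sentence before closing the proof.
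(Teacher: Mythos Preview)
Your proposal is correct and matches the paper's approach exactly: the paper simply states that the corollary is derived ``instantly'' from Proposition~\ref{prop:Fritz}, and your write-up spells out precisely the translation (quotient of a subgroup of $\Gal(M/K)$ implies divisibility of order and exponent, while $E\subseteq L_2$ gives abelianity) that this word ``instantly'' is pointing to.
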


We also need the following result. 

\begin{prop}\label{helpful4}
For every prime $\ell$ let $G_\ell$ be a subgroup of $G$.
Consider the field
$$F:=\prod_{\ell\mid  B} K(\zeta_\ell, \sqrt[\ell]{G_\ell})\cap \prod_{\ell\nmid  B} K(\zeta_\ell, \sqrt[\ell]{G_\ell})\,.$$
Then we have 
$$F\subseteq \Bigl(K(\zeta_B)\prod_{q\mid Q}K(\sqrt[q]{G_q})\Bigr) \cap K(\zeta_{B_G/B})$$
and $F/K$ is a Kummer extension of exponent dividing $Q$ and degree dividing $Q^{r_Q}$, where $r_Q:=\max_{q\mid Q} \rank G_q$.
\end{prop}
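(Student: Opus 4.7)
The plan is to isolate the finitely many ``entangled'' primes using $B_G$, trap $F$ inside the cyclotomic field $K(\zeta_{B_G/B})$ by a base-change argument, and then deduce its Kummer structure from Corollary~\ref{cor:algebraic}.

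Set $L_1:=\prod_{\ell\nmid B}K(\zeta_\ell,\sqrt[\ell]{G_\ell})$ and $L_2:=\prod_{\ell\mid B}K(\zeta_\ell,\sqrt[\ell]{G_\ell})$, so $F=L_1\cap L_2$. Write $L_1=L_1^{(1)}L_1^{(2)}$ with $L_1^{(1)}:=\prod_{\ell\mid B_G,\,\ell\nmid B}K(\zeta_\ell,\sqrt[\ell]{G_\ell})$ and $L_1^{(2)}:=\prod_{\ell\nmid B_G}K(\zeta_\ell,\sqrt[\ell]{G_\ell})$. The defining property of $B_G$ gives that for each $\ell\nmid B_G$ the field $K(\zeta_\ell,\sqrt[\ell]{G_\ell})$ is linearly disjoint over $K$ from the compositum of all other $K(\zeta_{\tilde\ell},\sqrt[\tilde\ell]{G_{\tilde\ell}})$; iterating this yields $L_1^{(2)}\cap L_1^{(1)}L_2=K$, whence a standard Galois-theoretic manipulation reduces to $F=L_1^{(1)}\cap L_2$.

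To show $F\subseteq K(\zeta_{B_G/B})$, I base-change to $K':=K(\zeta_{B_G})$: since $K'$ contains $\zeta_\ell$ for every $\ell\mid B_G$, both $L_1^{(1)}K'$ and $L_2K'$ are pure Kummer extensions of $K'$, of exponents dividing $B_G/B$ and $B$ respectively. As $B_G/B$ and $B$ are coprime squarefree integers, their Galois groups have orders supported on disjoint prime sets, hence are linearly disjoint over $K'$, so $F\subseteq K'=K(\zeta_{B_G})$. Using Proposition~\ref{helpful3} together with the linear disjointness of the factors of $L_1^{(1)}$, the maximal abelian subextension of $L_1^{(1)}/K$ equals $K(\zeta_{B_G/B})$, giving $L_1^{(1)}\cap K(\zeta_\infty)=K(\zeta_{B_G/B})$; combined with Remark~\ref{helpful} and the decomposition $\Gal(K(\zeta_{B_G})/K)=\Gal(K(\zeta_B)/K)\times\Gal(K(\zeta_{B_G/B})/K)$, this forces $L_1^{(1)}\cap K(\zeta_{B_G})=K(\zeta_{B_G/B})$, so $F\subseteq K(\zeta_{B_G/B})$ and in particular $F/K$ is abelian.

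Write $\Gal(L_2/K)\cong N\rtimes H$ with $N=\bigoplus_{\ell\mid B}N_\ell$ the Kummer Galois group of $L_2/K(\zeta_B)$ and $H:=\Gal(K(\zeta_B)/K)$ acting on $N_\ell$ via the cyclotomic character $\chi_\ell$. Since $FK(\zeta_B)\subseteq L_2$ is abelian over $K$, the induced $H$-action on $\Gal(FK(\zeta_B)/K(\zeta_B))$ must be trivial; because $\chi_\ell(h)-1$ is a unit in $\Z/\ell$ for some $h\in H$ whenever $\ell\nmid Q$, the $\ell$-primary component of that Galois group vanishes for such $\ell$, yielding $FK(\zeta_B)\subseteq K(\zeta_B)\prod_{q\mid Q}K(\sqrt[q]{G_q})$ and finishing the containment claim. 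Finally, applying Corollary~\ref{cor:algebraic} with its $L_1:=K(\zeta_B)$, $L_2:=K(\zeta_{B_G/B})$ (so $L_1\cap L_2=K$ by Remark~\ref{helpful}) and $M:=\prod_{q\mid Q}K(\sqrt[q]{G_q})$ (abelian since $\zeta_Q\in K$), I obtain that $ML_1\cap L_2$ is abelian over $K$ of degree dividing $[M:K]\mid Q^{r_Q}$ and exponent dividing $\exp(M/K)\mid Q$; as $F$ lies in this intersection and $\zeta_Q\in K$, Kummer theory identifies $F/K$ as a Kummer extension of exponent dividing $Q$ and degree dividing $Q^{r_Q}$. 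The main obstacle I anticipate is the semidirect-product bookkeeping in the third paragraph, where the abelian structure of $F$ obtained in paragraph~2 must be carefully exploited to kill the Kummer content at primes $\ell\mid B$ with $\ell\nmid Q$, since the corresponding factors $K(\zeta_\ell,\sqrt[\ell]{G_\ell})$ are themselves non-abelian over $K$.
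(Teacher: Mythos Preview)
Your argument is correct and reaches the same conclusion as the paper, but the organization is genuinely different. The paper proceeds by decomposing $F$ into its $\tilde\ell$-parts $F_{\tilde\ell}$ and treating each prime $\tilde\ell$ separately: for $\tilde\ell\nmid Q$ it invokes Schinzel's theorem on abelian radical extensions to force $F_{\tilde\ell}=K$, while for $q\mid Q$ it bounds $F_q$ directly inside $\bigl(K(\sqrt[q]{G_q})\prod_{\ell\mid(B/q)}K(\zeta_\ell)\bigr)\cap\prod_{\ell\mid(B_G/B)}K(\zeta_\ell)$. You instead first show globally that $F\subseteq K(\zeta_{B_G})$ via a base-change to $K(\zeta_{B_G})$, exploiting that the Kummer exponents $B$ and $B_G/B$ are coprime; then you refine to $F\subseteq K(\zeta_{B_G/B})$ by computing the abelianization of $L_1^{(1)}/K$; and finally you use the cyclotomic-character action on $\Gal(L_2/K(\zeta_B))$ to kill the $\ell\nmid Q$ Kummer content --- which is Schinzel's theorem reproved in situ rather than quoted. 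Both proofs finish with the identical application of Corollary~\ref{cor:algebraic}. Your route is arguably more conceptual (the coprimality-of-exponents trick is clean), while the paper's prime-by-prime argument is more hands-on and avoids the abelianization computation.

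Two minor remarks. First, the short exact sequence $1\to N\to\Gal(L_2/K)\to H\to 1$ need not split, so writing $N\rtimes H$ is not quite justified; however, your argument only uses the conjugation action of $H$ on the abelian kernel $N$, which is well-defined regardless of splitting, so nothing is lost. Second, the defining property of $B_G$ is stated for $G$ rather than for the subgroups $G_\ell$, but the disjointness you need follows immediately from the containments $K(\zeta_\ell,\sqrt[\ell]{G_\ell})\subseteq K(\zeta_\ell,\sqrt[\ell]{G})$.
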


\begin{proof}
The second assertion follows from the first as a consequence of 
Corollary~\ref{cor:algebraic} (setting $M=\prod_{q\mid Q}K(\sqrt[q]{G_q})$,  $L_1=K(\zeta_B)$ and $L_2=K(\zeta_{B_G/B})$) because $M/K$ is a Kummer extension of exponent dividing $Q$ and degree dividing $Q^{r_Q}$.
Also remark that by the definition of $B_G$ we have
$$F=\prod_{\ell\mid  B} K(\zeta_\ell, \sqrt[\ell]{G_\ell})\cap \prod_{\ell\mid (B_G/B)} K(\zeta_\ell, \sqrt[\ell]{G_\ell})\,.$$

For any prime $\widetilde \ell$, we consider the ${\widetilde \ell}$-part $F_{\widetilde \ell}$ of $F$, namely the largest subextension of $F$ whose degree is a power of ${\widetilde \ell}$.
For a prime $q\mid Q$ we clearly have 
$$F_{q} \subseteq \Bigl(K(\sqrt[q]{G_q})\prod_{\ell\mid  (B/q)} K(\zeta_\ell)\Bigr)\cap \prod_{\ell\mid  (B_G/B)} K(\zeta_\ell)\,.$$
So we conclude by proving that for $\widetilde \ell\nmid Q$ we have $F_{\widetilde \ell}=K$.

For $\widetilde \ell\nmid B$ we have
$$F_{\widetilde \ell} \subseteq \prod_{\ell\mid  B} K(\zeta_\ell)\cap \Bigl(K(\zeta_{\widetilde \ell}, \sqrt[\widetilde \ell]{G_{\widetilde \ell}})\prod_{\ell\mid  (B_G/B),\,\ell\neq \widetilde \ell} K(\zeta_\ell)\Bigr)\,.$$
Considering the former field on the right-hand-side, $F_{\widetilde \ell}/K$ is abelian. 
So we may replace the latter field by its largest abelian subextension. Since $\zeta_{\widetilde \ell}\notin K$, by Schinzel's Theorem on abelian radical extensions~\cite[Theorem~2]{Schinzel} we then have
$$F_{\widetilde \ell} \subseteq \prod_{\ell\mid  B} K(\zeta_\ell)\cap \prod_{\ell\mid  (\widetilde \ell B_G/B)} K(\zeta_\ell)=K\,,$$
where the equality holds by Remark~\ref{helpful}.

For $\widetilde \ell\mid B$ with $\widetilde \ell\nmid Q$ we may reason analogously because we have
$$F_{\widetilde \ell} \subseteq \Bigl(K(\zeta_{\widetilde \ell}, \sqrt[\widetilde \ell]{G_{\widetilde \ell}})\prod_{\ell\mid  (B/\widetilde \ell)} K(\zeta_\ell)\Bigr)\cap \prod_{\ell\mid (B_G/ B)} K(\zeta_\ell)\,,$$
which concludes the proof. 
\end{proof}

As remarked above, $B$ and $B_G$ in Proposition~\ref{helpful4} can be replaced by positive square-free multiples that still satisfy $B\mid B_G$.

\begin{rem}
As a special case of Proposition~\ref{helpful4}
we have
$$\prod_{\ell\mid  B}K(\zeta_\ell, \sqrt[\ell]{G})\cap \prod_{\ell\nmid  B} K(\zeta_\ell, \sqrt[\ell]{G})\subseteq K(\zeta_B, \sqrt[Q]{G}) \cap K(\zeta_{B_G/B}).$$
\end{rem}

\section{General bounds for the Artin density}

All densities mentioned in this section exist thanks to the results in~\cite{JP} by the first author and J\"arviniemi (which are conditional under the Extended Riemann Hypothesis).
Let $B$ be as   in  Section~\ref{secent}. We  denote by 
$\dens_B(G)$ the density of the primes $\mathfrak p$ of $K$ such that for every prime divisor 
$\ell\mid B$ we have $\ell\nmid\ind(G\bmod \mathfrak p)$.
For any prime $\ell$,  we write  $\dens_\ell(G)$ for the density of the primes 
$\mathfrak p$ of $K$ such that $\ell\nmid\ind(G\bmod \mathfrak p)$.
Finally, we denote by $\widetilde \dens_B(G)$ the density of the primes $\mathfrak p$ of $K$ such that for every $\ell\nmid B$ we have $\ell\nmid\ind(G\bmod \mathfrak p)$. 

\begin{theorem}\label{upperbound}
Suppose that $\dens(G)\neq 0$ (in particular, $r_2>0$). Then we have
$$\frac{\dens(G)}{A_{\mathcal R}}\leq \prod_{\ell\mid B}\left(1-{1\over \ell^{r_\ell}(\ell-1)}\right)^{-1}\leq 2 \prod_{\ell\mid (B/2)} \frac{\ell-1}{\ell-2}
\,.$$
\end{theorem}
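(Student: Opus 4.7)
The plan is to reduce the upper bound on $\dens(G)$ to the auxiliary density $\widetilde{\dens}_B(G)$, compute that density as an explicit Euler product using the entanglement results from Section~\ref{secent}, and then bound the resulting ratio factor by factor.

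First, I observe that if $\ind(G\bmod\mathfrak p)=1$, then in particular $\ell\nmid\ind(G\bmod\mathfrak p)$ for every prime $\ell\nmid B$. Hence $\dens(G)\le \widetilde{\dens}_B(G)$, and it suffices to compare $\widetilde{\dens}_B(G)$ with $A_{\mathcal R}$. For a single prime $\ell\nmid B$, the condition $\ell\nmid\ind(G\bmod\mathfrak p)$ is equivalent to $\mathfrak p$ \emph{not} splitting completely in $K(\zeta_\ell,\sqrt[\ell]{G})/K$, so by Chebotarev (conditional on ERH) together with Proposition~\ref{helpful2} the density of such primes equals $1-\frac{1}{\ell^{r_\ell}(\ell-1)}$. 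The linear disjointness of the family $\{K(\zeta_\ell,\sqrt[\ell]{G})\}_{\ell\nmid B}$ over $K$ proved in Proposition~\ref{helpful3}, combined with the density framework of~\cite{JP}, then yields the factorisation
$$\widetilde{\dens}_B(G)=\prod_{\ell\nmid B}\left(1-\frac{1}{\ell^{r_\ell}(\ell-1)}\right).$$

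Dividing by $A_{\mathcal R}=\prod_\ell(1-\frac{1}{\ell^{r_\ell}(\ell-1)})$, the factors with $\ell\nmid B$ cancel and I obtain
$$\frac{\dens(G)}{A_{\mathcal R}}\le\frac{\widetilde{\dens}_B(G)}{A_{\mathcal R}}=\prod_{\ell\mid B}\left(1-\frac{1}{\ell^{r_\ell}(\ell-1)}\right)^{-1},$$
which is the first inequality.

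For the second inequality I bound each factor on the right-hand side individually. For an odd prime $\ell\mid B$, since $r_\ell\ge 0$ we have $\ell^{r_\ell}(\ell-1)\ge\ell-1$, and therefore
$$\left(1-\frac{1}{\ell^{r_\ell}(\ell-1)}\right)^{-1}\le\left(1-\frac{1}{\ell-1}\right)^{-1}=\frac{\ell-1}{\ell-2}.$$
For $\ell=2$ (recall that $B$ is even by construction), the hypothesis $\dens(G)\neq 0$ forces $r_2\ge 1$, so $2^{r_2}(2-1)\ge 2$ and $(1-\tfrac{1}{2^{r_2}})^{-1}\le 2$. Taking the product over all $\ell\mid B$ yields the stated bound, because the odd prime divisors of $B$ are exactly the prime divisors of $B/2$ (as $B$ is square-free). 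The only mildly delicate point is the second equality displayed above for $\widetilde{\dens}_B(G)$, which requires the full strength of the entanglement analysis of Section~\ref{secent}; once Proposition~\ref{helpful3} is in place, everything else is a routine computation.
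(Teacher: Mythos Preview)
Your proof is correct and follows essentially the same route as the paper: bound $\dens(G)$ above by $\widetilde\dens_B(G)$, evaluate the latter as the Euler product $\prod_{\ell\nmid B}(1-\ell^{-r_\ell}(\ell-1)^{-1})$ via Propositions~\ref{helpful2} and~\ref{helpful3} together with Hooley's splitting criterion, divide by $A_{\mathcal R}$ so that only the factors $\ell\mid B$ survive, and then estimate those factors individually using $r_2\ge 1$ and $r_\ell\ge 0$ for odd $\ell$.
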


\begin{proof}
The second inequality is clear because we can estimate the factor $\ell=2$ using $r_2=1$ and each factor $\ell\neq 2$ using $r_\ell=0$.

For every prime $\ell\nmid B$ the degree of $K(\zeta_\ell, \sqrt[\ell]{G})/K$ equals $\ell^{r_\ell}(\ell-1)$ by  Proposition~\ref{helpful2}.
Moreover, such fields are  linearly disjoint by Proposition~\ref{helpful3}.
Remark that by Hooley's method  (see Moree's survey~\cite{MoreeArtin}) the condition  $\ell\nmid \ind(G\bmod \mathfrak p)$ can be replaced by $\mathfrak p$ not splitting completely in $K(\zeta_\ell, \sqrt[\ell]{G})$. We deduce that 
$$
\widetilde \dens_B(G)=\prod_{\ell\nmid B} \dens_\ell(G)=\prod_{\ell\nmid B}\left(1-{1\over \ell^{r_\ell}(\ell-1)}\right)\,.
$$
Since 
$$\frac{\dens(G)}{A_{\mathcal R}}\leq \frac{\widetilde \dens_B(G)}{A_{\mathcal R}}=\prod_{\ell\mid B}\left(1-{1\over \ell^{r_\ell}(\ell-1)}\right)^{-1}\,,$$
we conclude the proof. 
\end{proof}

We recall the definition of $B_G$ from  Section~\ref{secent}.

\begin{theorem}\label{lowerbound}
There exists a positive constant $c_{B}$ that depends only on $B$ such that if 
$\dens(G)\neq 0$ then we have
$$\frac{\dens(G)}{A_{\mathcal R}} \ge c_{B} \,.$$
\end{theorem}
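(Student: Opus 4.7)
The strategy is to combine Hooley's Möbius representation with the entanglement control of Section~\ref{secent} to isolate the $G$-dependence of $\dens(G)/A_{\mathcal R}$ into a finite factor bounded in terms of $B$ alone. Starting from the GRH-conditional identity
$$\dens(G) = \sum_{n \text{ squarefree}} \frac{\mu(n)}{[K(\zeta_n, \sqrt[n]{G}) : K]},$$
I split each squarefree $n$ as $n = m n'$ with $m \mid B_G$ and $\gcd(n', B_G) = 1$. The defining property of $B_G$, together with Proposition~\ref{helpful3}, gives the linear disjointness of $\prod_{\ell\mid m} L_\ell$ from $\prod_{\ell\mid n'} L_\ell$ over $K$ (where $L_\ell := K(\zeta_\ell, \sqrt[\ell]{G})$), and the pairwise disjointness of the $L_\ell$ for $\ell \mid n'$; combined with Proposition~\ref{helpful2} the Möbius sum factors as
$$\dens(G) = S(G) \cdot \prod_{\ell \nmid B_G}\left(1 - \frac{1}{\ell^{r_\ell}(\ell-1)}\right), \qquad S(G) := \sum_{m \mid B_G} \frac{\mu(m)}{[K(\zeta_m, \sqrt[m]{G}):K]}.$$
Dividing by $A_{\mathcal R}$, the theorem reduces to the bound $S(G) \geq c_B \prod_{\ell \mid B_G}(1 - 1/(\ell^{r_\ell}(\ell-1)))$.

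I next interpret $S(G)$ Galois-theoretically via Chebotarev on $V := \prod_{\ell \mid B_G} L_\ell$: it is the proportion of $g \in \Gal(V/K)$ whose restriction to each $L_\ell$ (for $\ell \mid B_G$) is non-trivial. Writing $B_G = B m_0$ with $\gcd(m_0, B) = 1$ and $V = V_B V_{m_0}$, Proposition~\ref{helpful4} constrains the entanglement $V_B \cap V_{m_0}$ to lie inside a field $F$ abelian over $K$ of exponent dividing $Q$, and by Corollary~\ref{cor:algebraic} (via Proposition~\ref{prop:Fritz}) the group $\Gal(F/K)$ is a quotient of a subgroup of a fixed cyclotomic-Kummer Galois group of exponent $Q \mid B$. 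I then stratify $\Gal(V/K)$ by its projection to $\Gal(F/K)$: on each fibre the restrictions to $V_B$ and to $V_{m_0}$ decouple, while within $V_{m_0}$ the $L_\ell$-restrictions remain pairwise independent by Proposition~\ref{helpful3}. Selecting a fibre $h \in \Gal(F/K)$ over which the conditional density is positive (which is possible by $\dens(G) \neq 0$ and Lenstra's non-vanishing criterion) produces a bound of the shape
$$S(G) \geq \frac{1}{[F:K]}\cdot P_B(G) \cdot \prod_{\ell \mid m_0}\left(1 - \frac{1}{\ell^{r_\ell}(\ell-1)}\right),$$
where $P_B(G)$ is the proportion of $g_B \in \Gal(V_B/K)$ non-trivial on every $L_\ell$ with $\ell \mid B$.

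The main obstacle, and the delicate technical heart of the proof, is the final quantitative step: bounding $[F:K]$ from above and $P_B(G)/\prod_{\ell \mid B}(1 - 1/(\ell^{r_\ell}(\ell-1)))$ from below, uniformly in $G$ and purely in terms of $B$. For $[F:K]$, the key input is that $F$ is simultaneously contained in $K(\zeta_{B_G/B})$ and Kummer of exponent $Q$ over $K$; iterating Corollary~\ref{cor:algebraic} with $M$ running through the Kummer pieces of $K(\zeta_B)\prod_{q \mid Q}K(\sqrt[q]{G})$ bounds its degree by an explicit function of $B$. For $P_B(G)$, the problem reduces to a finite Galois-theoretic analysis of the at most $\omega(B)$ extensions $L_\ell$ with $\ell \mid B$, whose pairwise entanglements are again controlled by Corollary~\ref{cor:algebraic}. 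Uniformity in the ranks $r_\ell$ is the subtlest point: it ultimately follows from the fact that at each $\ell \mid B$, the entanglement between the cyclotomic part $K(\zeta_\ell)$ and the Kummer part $K(\sqrt[\ell]{G})$ collapses to a substructure of bounded cyclotomic-Kummer type, independent of $r_\ell$. Combining these ingredients yields an explicit $c_B$ as required.
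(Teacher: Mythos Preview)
Your overall architecture matches the paper's: factor $\dens(G)$ into the ``tame'' Euler product over $\ell\nmid B_G$ times a finite Galois-theoretic quantity $S(G)$, then bound $S(G)$ from below by analysing the fibre product $\Gal(V_B/K)\times_{\Gal(F/K)}\Gal(V_{m_0}/K)$. The genuine gap is the step where you bound $[F:K]$ in terms of $B$ alone. Proposition~\ref{helpful4} applied with $G_\ell=G$ only yields $[F:K]\mid Q^{r_Q}$ with $r_Q=\max_{q\mid Q}\rank G_q$, which can equal the rank $r$ of $G$; neither the inclusion $F\subseteq K(\zeta_{B_G/B})$ nor the exponent bound (exponent dividing $Q$) prevents $[F:K]$ from growing with $r$, since $K(\zeta_{B_G/B})$ may contain arbitrarily many independent sub\-extensions of exponent $q$ for each $q\mid Q$. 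Your sentence ``iterating Corollary~\ref{cor:algebraic} \ldots\ bounds its degree by an explicit function of $B$'' does not supply an actual argument, and I do not see how to make one with $G_\ell=G$.

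The paper's proof contains precisely the idea needed to close this gap, and it is not in your sketch: rather than working with the full $V_B=\prod_{\ell\mid B}K(\zeta_\ell,\sqrt[\ell]{G})$, one first uses $\dens(G)\neq0$ to fix a single automorphism $\tau$ that is nontrivial on every $L_\ell$ with $\ell\mid\sB$, then for each such $\ell$ picks a \emph{single} generator $\alpha_\ell\in G$ not fixed by $\tau$ and replaces $V_B$ by the much smaller field $L=\prod_{\ell\mid\sB}K(\zeta_\ell,\sqrt[\ell]{\alpha_\ell})$. Applying Proposition~\ref{helpful4} with $G_\ell=\langle\alpha_\ell\rangle$ now gives $r_Q=1$, hence $[E:K]\mid Q$ with $E=L\cap K_{\sB_G/\sB}$, which \emph{is} bounded purely in terms of $B$. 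The remaining count (your $N_{m_0}(h)$) is then handled by a careful prime-by-prime analysis that also requires the preliminary enlargement of $B$ to $\sB$ so that $\ell-1\ge 2Q$ for $\ell\nmid\sB$; this guarantees that the extra constraint coming from $E$ on each $K(\zeta_\ell)$ costs at most a bounded factor. A secondary issue: your displayed inequality for $S(G)$ mixes the global proportion $P_B(G)$ with a single-fibre contribution on the $m_0$-side, and as written it does not follow from the stratification; once you pass to a single fibre $h$, both factors must be the fibre-specific counts, and bounding the $m_0$-side count in that fibre is exactly where the paper's explicit combinatorics (the quantities $g_{r_\ell,Q_\ell}$) enters.
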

\begin{proof}
We replace $B$ by the smallest square-free positive multiple $\sB$ so that for every prime $\ell\nmid B$ the integer $\ell-1$ is at least $2Q$,  where, as before, $Q$ denotes the product of the prime numbers $q$ such that $\zeta_q\in K$.
Observing that $Q\leq B$ we see that  $\sB$ only depends on the original value for $B$. 
We call $\sB_G$ the least common multiple between $\sB$ and $B_G$ and remark that $\sB_G$ is squarefree. We point out that our proof strategy involves concentrating on the prime divisors of $\sB_G$, distinguishing the primes that divide $\sB$ by those who don't.

By the independence and maximality of the cyclotomic-Kummer extensions at primes not dividing $\sB_G$ (as described in  Section~\ref{secent}) we have 
$$\dens(G)=\dens_{\sB_G}(G) \prod_{\ell\nmid \sB_G} \dens_\ell(G)= \dens_{\sB_G}(G) \prod_{\ell\nmid \sB_G} \left(1-{1\over \ell^{r_\ell}(\ell-1)}\right)\,.$$
Thus we have
 $$\frac{\dens(G)}{A_{\mathcal R}}=\frac{\dens_{\sB_G}(G)}{\prod_{\ell\mid \sB_G} \left(1-{1\over \ell^{r_\ell}(\ell-1)}\right)}
 \geq \frac{\dens_{\sB_G}(G)}{\prod_{\ell\mid (\sB_G/\sB)} \left(1-{1\over \ell^{r_\ell}(\ell-1)}\right)}\,.$$
 Since $\dens(G)\neq 0$ we must have $\dens_{\sB}(G)\neq 0$ so there exists a Galois automorphism $\tau$ on
 $$\prod_{\ell\mid \sB}K(\zeta_\ell, \sqrt[\ell]{G}),$$
 which is not the identity on any of the fields $K(\zeta_\ell, \sqrt[\ell]{G})$. We can fix a set of generators of $G$ and for any $\ell\mid \sB$ we choose one of those generators, call it $\alpha_\ell$, which is not fixed by $\tau$, setting $\alpha_\ell=1$ if no such generator exists. We then call $\tau\mid_L$ the restriction of $\tau$ to the field
 $$L:=\prod_{\ell\mid \sB}K(\zeta_\ell, \sqrt[\ell]{\alpha_\ell})\,.$$
 For every positive integer $N$ we write $K_{N}:=\prod_{\ell\mid N}K(\zeta_\ell, \sqrt[\ell]{G})$.
 We then define $f_{\sB_G, \,\tau}(G)$ as the number of automorphisms in $\Gal(K_{\sB_G}/K)$
whose restriction to $L$ is $\tau\mid_L$, and such that for every $\ell\mid (\sB_G/\sB)$ the restriction to  $K(\zeta_\ell, \sqrt[\ell]{G})$ is not the identity.
By our choice of $\tau$ we clearly have
$$\frac{f_{\sB_G, \,\tau}(G)}{[K_{\sB_G}:K]} \leq \dens_{\sB_G}(G)$$
so we are left to prove that there is a constant $c^\star_B$ (that depends only on $B$) such that
\begin{equation}\label{eq: goal}
\frac{f_{\sB_G, \,\tau}(G)}{[K_{\sB_G}:K]} \ge c^\star_B
 \prod_{\ell\mid (\sB_G/ \sB)} \left(1-{1\over \ell^{r_\ell}(\ell-1)}\right)\,.
 \end{equation}
We consider the field
 $$E:=L\cap K_{\sB_G/\sB}\,.$$
We can write  
\begin{equation}\label{eq: rat1 * rat2}
\frac{f_{\sB_G, \,\tau}(G)}{[K_{\sB_G}:K]}  = \frac{1}{[L:E]}\cdot  \frac{f_{\sB_G, \,\tau\mid E}(G)}{[K_{\sB_G/\sB}:K]},
\end{equation}
 where $f_{\sB_G, \,\tau\mid E}(G)$ counts the automorphisms in $\Gal(K_{\sB_G/\sB}/K)$ whose restriction to $E$ equals $\tau\mid_E$ and such that for every $\ell\mid (\sB_G/ \sB)$ the restriction to  $K(\zeta_\ell, \sqrt[\ell]{G})$ is not the identity. Indeed, once we have fixed a suitable automorphism in $\Gal(K_{\sB_G/\sB}/K)$, then we only need to extend it in a prescribed way to  $LK_{\sB_G/\sB}$, and we have $[LK_{\sB_G/\sB}:K_{\sB_G/\sB}]= [L:E]$.
 Since $[L:E]\leq [L:K]\leq \prod_{\ell\mid \sB} \ell(\ell-1)$, the inverse of $[L:E]$ can be bounded from below only in terms of $B$.
 
 If $\sB_G=\sB$, then the last ratio in~\eqref{eq: rat1 * rat2} is $1$ and we may conclude, so in what
  follows we will suppose that $\sB_G$ has more prime divisors with 
respect to $\sB$.
 
Recall that by our choice of $\sB$ we have 
$$
 [K_{\sB_G/\sB}:K]=\prod_{{\ell\mid (\sB_G/\sB)} } \frac{1}{\ell^{r_\ell}(\ell-1)}
$$ 
 so, by~\eqref{eq: goal},  we are left to prove that there exists a constant $ c^\sharp_B$ depending only on $B$ such that
\begin{equation}\label{eq:Suff Cond}
f_{\sB_G, \,\tau\mid E}(G)\geq c^\sharp_B \prod_{\ell\mid (\sB_G/\sB)} \left(\ell^{r_\ell}(\ell-1)-1\right)\,.
\end{equation} 
By Proposition~\ref{helpful4} (setting $G_\ell=\langle \alpha_\ell \rangle$ for $\ell\mid \sB$ and $G_\ell=G$ otherwise, so in particular $r_Q=1$) we deduce that $E/K$ is a Kummer extension of degree dividing $Q$ contained in $K(\zeta_{\sB_G/\sB})$.

Fix a prime $q$ dividing $[E:K]$, so in particular we have $q\mid Q$. The $q$-part of the extension $E/K$ is a cyclic extension $C_q/K$ of degree $q$ contained in 
 $\prod_{\ell\in \cS_q}K(\zeta_\ell)$, 
 where $\cS_q$ is a subset of the primes dividing $\sB_G$, not dividing $\sB$, and such that $q\mid [K(\zeta_\ell):K]$. We may assume that $\cS_q$ is minimal.
 
Then (on the $q$-part of the considered Galois groups) the condition of extending the automorphism $\tau\mid_{C_q}$ is a compatibility of $q$-characters related to the subextensions of degree $q$ of $K(\zeta_\ell)$ for $\ell\in \cS_q$. And this condition is satisfied if we fix $\ell_q\in \cS_q$ and select an appropriate automorphism on the degree $q$ subextension of $K(\zeta_{\ell_q})/K$, such choice depending on the free choices that we can make for $K(\zeta_{\ell})/K$ by varying $\ell\in \cS_q$ with $\ell\neq \ell_q$.
 
 For every $\ell\mid (\sB_G/\sB)$ call $Q_\ell$ the product of the primes $q$ as above such that $\ell=\ell_q$ (setting $Q_\ell=1$ for an empty product). 
 So, up to considering fewer automorphisms, we are able to separate the conditions at the different primes, namely
\begin{equation}\label{eq: f vs g}
f_{\sB_G, \,\tau\mid E}(G)\geq \prod_{\ell\mid (\sB_G/\sB)} g_{r_\ell,Q_\ell},
\end{equation}
 where $g_{r_\ell,Q_\ell}$ is the amount of automorphisms in $K(\zeta_\ell, \sqrt[\ell]{G})$ that are not the identity and that are the fewest by varying a prescribed restriction to the cyclic subextension of degree $Q_\ell$ of $K(\zeta_\ell)$.
 We have (recalling that $\ell-1\geq 2Q \geq 2Q_\ell$)
\begin{equation}\label{eq: g  - fin}
g_{r_\ell,Q_\ell}\geq \frac{[K(\zeta_\ell, \sqrt[\ell]{G}):K]}{Q_\ell} -1=\frac{\ell^{r_\ell}(\ell-1)}{Q_\ell}-1\geq \frac{1}{ \min\{2, Q_\ell\}Q_\ell} (\ell^{r_\ell}(\ell-1)-1)\,.
\end{equation}
Remark that $$
 \prod_{\ell\mid (\sB_G/\sB)} Q_\ell \mid Q
 $$
 and that the number of primes $\ell\mid (\sB_G/\sB)$ such that $Q_\ell\neq 1$ is bounded by the number of prime divisors of $Q$, which is some constant $C_K$ that only depends on the field $K$.
Substituting~\eqref{eq: g  - fin} in~\eqref{eq: f vs g} we then obtain
$$
f_{\sB_G, \,\tau\mid E}(G)\geq \frac{1}{2^{C_K}Q} \prod_{\ell\mid (\sB_G/\sB)}  {\ell^{r_\ell}(\ell-1)-1}, 
$$
which  implies~\eqref{eq:Suff Cond} and concludes the proof.
 \end{proof}

Finally, since $B$ depends only on $K$, we observe that Theorems~\ref{upperbound}  and~\ref{lowerbound} imply that $\dens (G)/A_{\mathcal R}$ can be bounded only in terms of $K$,  which gives an analogues of Theorem~\ref{thm:boundA} for arbitrary number fields $K$ and finitely generated groups of $K^\times$, albeit less explicit.

\begin{cor}\label{cor:KK}
There exist two positive constants $c_0(K)$ and $C_0(K)$ that depend only on $K$  such that if 
$\dens(G)\neq 0$ then we have
$$c_0(K) \le \frac{\dens(G)}{A_{\mathcal R}} \le C_0(K)\,.$$
\end{cor}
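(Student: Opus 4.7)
The statement is a direct packaging of Theorems~\ref{upperbound} and~\ref{lowerbound}, so the plan is simply to record why the bounds that appear in those two theorems depend only on $K$, and then set the constants accordingly.

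First I would invoke Theorem~\ref{upperbound}, which under the hypothesis $\dens(G)\neq 0$ yields
$$\frac{\dens(G)}{A_{\mathcal R}}\leq 2\prod_{\ell\mid (B/2)}\frac{\ell-1}{\ell-2}\,.$$
The integer $B$ was defined in Section~\ref{secent} as the square-free part of the smallest even integer $\Omega$ such that the largest abelian subextension of $K/\mathbb Q$ is contained in $\mathbb Q(\zeta_\Omega)$, hence $B$ is an invariant of $K$. Consequently the displayed upper bound is a function of $K$ alone, and I would define
$$C_0(K):=2\prod_{\ell\mid (B/2)}\frac{\ell-1}{\ell-2}\,.$$

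Next I would apply Theorem~\ref{lowerbound}, which produces a positive constant $c_B$, depending only on $B$, such that $\dens(G)/A_{\mathcal R}\geq c_B$ whenever $\dens(G)\neq 0$. Again since $B$ depends only on $K$, the constant $c_B$ is itself determined by $K$, and I would set $c_0(K):=c_B$. This yields the claimed two-sided bound.

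There is no genuine obstacle in this corollary: all the analytic and algebraic work has already been carried out in Theorems~\ref{upperbound} and~\ref{lowerbound}, and the only thing left is the bookkeeping observation that $B$ (and therefore both $C_0(K)$ and $c_0(K)$) is intrinsic to $K$, which is immediate from the definition of $B$ in Section~\ref{secent}. In particular, neither constant depends on the group $G$ or on its rank-parameters $r_\ell$, which is precisely the uniformity statement the corollary asserts.
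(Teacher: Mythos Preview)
Your proposal is correct and matches the paper's own argument: the corollary is simply the combination of Theorems~\ref{upperbound} and~\ref{lowerbound} together with the observation that $B$ depends only on $K$.
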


\end{document}